\renewcommand{\(}{\left(}
\renewcommand{\)}{\right)}
\newcommand{\sgn}{{\rm sgn}}
\newtheorem{theorem}{Theorem}
\newtheorem{definition}[theorem]{Definition}
\newtheorem{lemma}[theorem]{Lemma}
\newtheorem{remark}[theorem]{Remark}
\newtheorem{conjecture}[theorem]{Conjecture}
\title{The Set of Orthogonal Tensor Trains}
\author{Pardis Semnani and Elina Robeva}
\date{\today}
\begin{document}

\maketitle

\begin{abstract}
    In this paper we study the set of tensors that admit a special type of decomposition called an orthogonal tensor train decomposition. Finding equations defining varieties of low-rank tensors is generally a hard problem, however, the set of orthogonally decomposable tensors is defined by appealing quadratic equations. The tensors we consider are an extension of orthogonally decomposable tensors. We show that they are defined by similar quadratic equations, as well as an interesting higher-degree additional equation.
\end{abstract}

\section{Introduction}With the emergence of big data, it is more and more often the case that information is recorded in the form of a tensor (or multi-dimensional array). The importance of decomposing such a tensor is (at least) twofold. First, it provides hidden information about the data at hand,
and second, having a concise decomposition of the tensor allows us to store it much more
efficiently. 
One of the biggest obstacles in dealing with tensors, however, is that decomposing
them is often computationally hard. For example, finding (the number of terms of) the  CP-decomposition~\cite{Hitchcock} of a general tensor is NP-hard~\cite{HL}, and the set of tensors of CP rank at most $r$ is not closed for any $r\geq 2$, making the low-rank approximation problem impossible in some instances~\cite{KoBa09}. It is also notoriously hard to describe all the defining equations of the set of tensors of rank at most $r$~\cite{Landsberg}.

Finding the CP-decomposition of special subclasses of tensors, however, can be done efficiently. For example, low-rank tensors can be decomposed via Jennrich's algorithm and their subfamily of orthogonally decomposable tensors can be decomposed via the slice method~\cite{kolda2015symmetric} or via the tensor power method~\cite{Anandkumar2014}. Furthermore, the set of orthogonally decomposable tensors of bounded rank is closed~\cite{BDHR}. In fact, it is a real algebraic variety defined by interpretable quadratic equations which represent symmetry in the contraction of the tensor $T$ with itself. The eigenvectors and singular vector tuples of orthogonally decomposable tensors can be found efficiently~\cite{Robeva, RobSei}, making the family of such tensors as appealing as the set of matrices. However, such tensors are very rare -- the rank of a general $n\times n\times\cdots\times n$ ($d$-times) tensor is $\mathcal O(n^{d-1})$, while that of an orthogonally decomposable one is $n$.

Tensor networks provide many other ways of decomposing tensors. They originate from quantum physics and are used to depict the structure of steady states of Hamiltonians of quantum systems~\cite{TNNutshell,quantum}. Many types of tensor network decompositions, like tensor trains~\cite{oseledets}, also known as matrix product states~\cite{quantum}, are used in machine learning to decompose data tensors in meaningful ways. The CP-decomposition of a tensor can also be represented by a tensor network whose underlying graph is a hypergraph~\cite{RobSei2}. While tensor network decompositions can represent various tensor structures, they are not unique~\cite{ye2019tensor}.

More recently, tensor train decompositions for which the tensors at the vertices are orthogonally decomposable have been studied~\cite{HalMulRob20}. It was shown that those can be decomposed efficiently, at least in the case of length-2 tensor trains. In this work we study the equations that define the set of tensors that can be decomposed this way. Finding the description via equations would help in determining which tensors can be expressed in this way, and if it is easy to approximate a given tensor by such a tensor. We compare the equations we get to the equations defining the set of orthogonally decomposable tensors. It turns out, the two are quite similar and structured.

The rest of this manuscript is organized as follows. In Section~\ref{s2} we provide background on orthogonally decomposable tensors as well as tensor networks. In Section~\ref{s3} we turn to orthogonally decomposable tensor trains of length 2 and present our findings.

\section{Background}\label{s2}
In this section we provide the necessary background on orthogonal tensor decomposition, as well as tensor networks.
\subsection{Tensors}
We begin by defining tensors and symmetric tensors.

A {\em tensor} $T$ is an $n_1\times\cdots\times n_d$ table with elements in a field, which for us will be the reals. The number $d$ of sides of the table is called the {\em order} of the tensor. The set of all tensors of size $n_1\times\cdots\times n_d$ with real entries is denoted by $\mathbb R^{n_1}\otimes\cdots\otimes \mathbb R^{n_d}$.

A tensor $T\in\mathbb R^{n_1}\otimes\cdots\otimes\mathbb R^{n_d}$ is a {\em rank-1 } tensor if there exist vectors $v^{(1)}\in\mathbb R^{n_1},\ldots, v^{(d)}\in\mathbb R^{n_d}$ such that
$$T = v^{(1)}\otimes\cdots\otimes v^{(d)}.$$
The {\em CP-decomposition}~\cite{Hitchcock} expresses a tensor $T\in\mathbb R^{n_1}\otimes\cdots\otimes\mathbb R^{n_d}$ as a sum of rank-1 terms:
$$T = \sum_{i=1}^rv_i^{(1)}\otimes\cdots\otimes v_i^{(d)}.$$

An $n\times\cdots \times n$ ($d$-times) tensor $T$ is {\em symmetric} if for any permutation $\sigma$ on $\{1,\ldots, d\}$, $T_{i_1\ldots i_d}= T_{i_{\sigma(1)}\ldots i_{\sigma(d)}}$. The set of such tensors is denoted by $S^d(\mathbb R^n)$.

A symmetric tensor $T\in S^d(\mathbb R^n)$ has rank-1 if it can be expressed as
$$T = \lambda v^{\otimes d},$$
i.e., as a scalar times the outer product of a vector with itself $d$ times. A {\em symmetric CP-decomposition} of a symmetric tensor $T\in S^d(\mathbb R^n)$ expresses $T$ as a sum of symmetric rank-1 terms:
$$T = \sum_{i=1}^r\lambda_i v_i^{\otimes d}.$$

\subsection{Tensor networks}
In this subsection we provide the necessary background on tensor networks and tensor trains.
Let  $T\in\mathbb{R}^{m_1}\otimes\cdots\otimes\mathbb R^{m_k}$ and $S\in\mathbb{R}^{n_1}\otimes\cdots\otimes\mathbb R^{n_\ell}$ be two tensors such that $m_i=n_j$. The {\em contraction} of the tensors $T$ and $S$ along their $i$th and $j$th modes is the tensor $R\in\mathbb{R}^{m_1}\otimes\cdots\otimes\mathbb R^{m_{i-1}}\otimes\mathbb R^{m_{i+1}}\otimes\cdots\otimes\mathbb R^{m_k}\otimes\mathbb R^{n_1}\otimes\cdots\otimes\mathbb R^{n_{j-1}}\otimes\mathbb R^{n_{j+1}}\otimes\cdots\otimes \mathbb R^{n_\ell}$ whose entries are
\[R_{a_1\ldots a_{i-1} a_{i+1}\ldots a_{k}b_1\ldots b_{j-1}b_{j+1}\ldots b_{\ell}}=\sum_{c=1}^{m_i}T_{a_1\ldots a_{i-1}c a_{i+1}\ldots a_{k}}S_{b_1\ldots b_{j-1}cb_{j+1}\ldots b_{\ell}},\]
where $a_1\in\Set{1,\ldots,m_1},\ldots,a_k\in\Set{1,\ldots,m_k},b_1\in\Set{1,\ldots,n_1},\ldots,b_\ell\in\Set{1,\ldots,n_\ell}$.

Starting with a number of tensors, and applying repeated contractions in a prescribed manner yields the notion of a tensor network. A tensor network can be represented by a graph in which each node corresponds to one of the tensors involved in forming the tensor network, and the number of the edges connected to each node is equal to the order of the tensor corresponding to that node.

\begin{definition}
Consider a 3-tuple $G = (V, E, D)$, where $V$ is a nonempty set, 
$E$ is a subset of $\Set{\Set{v_1,v_2} | v_1,v_2\in V, v_1\neq v_2}$, and $D$ is a subset of $\Set{\Set{v} | v\in V}$. The elements of $V$, $E\cup D$, and $D$ are called the {\em vertices}, {\em edges}, and {\em dangling edges} of $G$ respectively, and $G$ is called a graph. Each edge in $e\in E\cup D$ is assigned a positive integer $n_e$. Each vertex $v\in V$ is assigned a tensor $T_v\in\otimes_{v \in e \in E \cup D} \mathbb R^{n_e}$. The resulting {\em tensor network} state $T\in \otimes_{e\in D} \mathbb R^{n_e}$ is obtained by contracting the tensors $T_v$ along all edges $e\in E$.
\end{definition}

For instance, in Figure \ref{f1}, two order-3 tensors $T\in\mathbb{F}^{n_1\times n_2\times n_3}$ and $S\in\mathbb{F}^{n_3\times n_4\times n_5}$ are contracted along their 3rd and 1st modes  respectively to form an $n_1\times n_2\times n_4\times n_5$ tensor network, which is called a tensor train of length 2.

We define a {\em tensor train} of length $l$ to be a tensor network consisting of $l$ vertices arranged in a line, each of which has 3 adjacent edges (see Figure~\ref{f4}).

\begin{figure}[ht]
\begin{center}\includegraphics[width=0.9\linewidth]{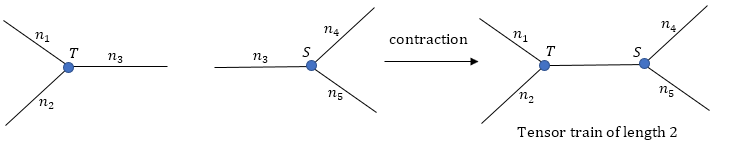}
\end{center}
\caption{How a tensor train of length 2 is generated}\label{f1}
\end{figure}
\begin{figure}[ht]
\centering
\includegraphics[scale=0.6]{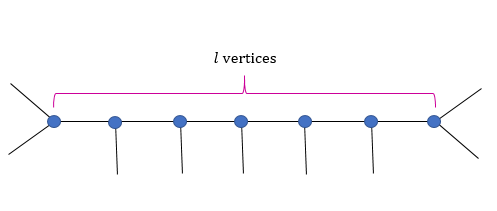}
\caption{Tensor train of length $l$}\label{f4}
\end{figure}

\begin{remark}
Matrix product states often appear in the condensed matter physics literature with one additional vertex and edge attached to each end of the diagram, as opposed to our diagrams as in Figure~\ref{f4} \cite{TNNutshell, quantum}. This format can be converted to the format introduced here simply with a contraction of the single edges, thus removing the vertices at each end. Conversely, given a tensor network in the form of our diagram, a multiplication by the identity matrix at each end will yield the matrix product state form. Thus, as the two network formats are  equivalent, we proceed with the use of the term ``tensor train'' to describe the networks we study in this paper. The reason for the modified form is to better generalize and extend the class of tensors known as orthogonally decomposable tensors, as studied in \cite{Anandkumar2014,BDHR,kolda2015symmetric,Robeva,RobSei}.
\end{remark}
\color{black}

\subsection{Orthogonally Decomposable Tensors}\label{ss2.3}
A tensor $T\in \mathbb R^{n_1}\otimes\cdots\otimes\mathbb R^{n_d}$ is {\em orthogonally decomposable} (or {\em odeco}) if it has a decomposition
$$T = \sum_{i=1}^rv_i^{(1)}\otimes\cdots\otimes v_i^{(d)},$$
where for every $j=1,\ldots, d$, the vectors $v_1^{(j)}, \ldots, v_r^{(j)}\in\mathbb R^{n_j}$ are orthogonal.

A symmetric tensor $T\in S^d(\mathbb R^n)$ is {\em symmetrically orthogonally decomposable} (or {\em symmetrically odeco}) if it has a decomposition
$$T = \sum_{i=1}^r\lambda_i v_i^{\otimes d},$$
where $v_1,\ldots, v_r\in\mathbb R^n$ are orthonormal.

It turns out that the set of odeco tensors (both in the symmetric and ordinary cases) is a real algebraic variety.

\begin{theorem}[\cite{BDHR}]\label{thm:1} Let $T\in\mathbb R^{n_1}\otimes\cdots\otimes \mathbb R^{n_d}$. For every $q=1,\ldots, d$, define $$T\bullet_qT\in \left(\mathbb R^{n_1}\otimes\cdots\otimes \mathbb R^{n_{q-1}}\otimes\mathbb R^{n_{q+1}}\otimes\cdots\otimes \mathbb R^{n_d}\right)^{\otimes 2}$$ by
$$(T\bullet_q T)_{i_1\ldots i_{q-1}i_{q+1}\ldots i_dj_1\ldots j_{q-1}j_{q+1}\ldots j_d} = \sum_{m=1}^{n_q}T_{i_1\ldots i_{q-1}mi_{q+1}\ldots i_d}T_{j_1\ldots j_{q-1}mj_{q+1}\ldots j_d},$$
i.e., it is the {\em contraction} of $T$ with itself along the $q$-th mode.
Then, $T$ is odeco if and only if for every $q=1,\ldots, d$, the tensor $T\bullet_q T$ lies in $S^2(\mathbb R^{n_1})\otimes\cdots\otimes S^2(\mathbb R^{n_d})$, which means that for all $\ell\in\Set{1,\ldots,d}$ with $\ell\neq q$, the entry $(T\bullet_q T)_{i_1\ldots i_{q-1}i_{q+1}\ldots i_dj_1\ldots j_{q-1}j_{q+1}\ldots j_d}$ remains unchanged if we permuted $i_\ell$ and $j_\ell$.
\end{theorem}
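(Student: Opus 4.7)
The plan splits into the easy direction and the substantive converse.

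For the forward direction, I would just compute. Substituting the decomposition $T = \sum_{i=1}^r v_i^{(1)}\otimes\cdots\otimes v_i^{(d)}$ with orthogonal factors in each mode into the definition of $\bullet_q$ yields
\[
T\bullet_q T \;=\; \sum_{i,j=1}^r \langle v_i^{(q)}, v_j^{(q)}\rangle \bigotimes_{\ell\neq q}\left(v_i^{(\ell)}\otimes v_j^{(\ell)}\right),
\]
and orthogonality of $v_1^{(q)},\ldots,v_r^{(q)}$ collapses this to the diagonal $\sum_i \|v_i^{(q)}\|^2 \bigotimes_{\ell\neq q}(v_i^{(\ell)})^{\otimes 2}$, whose summands are tensor products of symmetric rank-$1$ matrices and therefore lie in $\bigotimes_{\ell\neq q}S^2(\mathbb R^{n_\ell})$.

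For the converse, I would induct on the order $d$. The base case $d=2$ is essentially the existence of the singular value decomposition: the hypothesis is vacuous, and every matrix is odeco via SVD. For $d\ge 3$, the strategy is to peel off one mode, say $d$, by examining the slices $T_m\in\mathbb R^{n_1}\otimes\cdots\otimes\mathbb R^{n_{d-1}}$ for $m=1,\ldots,n_d$. Since $T\bullet_d T = \sum_m T_m\otimes T_m$, the hypothesis at $q=d$ encodes a strong compatibility condition between the slices. My first step is to verify that the hypotheses of the theorem restrict to each slice $T_m$ for the remaining modes $q<d$, so that by induction each $T_m$ is odeco. My second step is to use the hypothesis at $q=d$ to argue that the orthogonal frames supplied by induction for different $T_m$ can be chosen to coincide, yielding a common orthogonal frame in modes $1,\ldots,d-1$.

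Once that alignment is achieved, the slices admit a simultaneous decomposition
\[
T_m \;=\; \sum_{i=1}^r c_{mi}\, v_i^{(1)}\otimes\cdots\otimes v_i^{(d-1)},\qquad m=1,\ldots,n_d,
\]
with $v_1^{(\ell)},\ldots,v_r^{(\ell)}$ orthogonal for each $\ell\le d-1$, and setting $v_i^{(d)} := (c_{1i},\ldots,c_{n_d i})\in\mathbb R^{n_d}$ reassembles $T = \sum_{i=1}^r v_i^{(1)}\otimes\cdots\otimes v_i^{(d)}$; orthogonality of the $v_i^{(d)}$ then follows by invoking the hypothesis at any $q<d$. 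The main obstacle is exactly this alignment step, since each inductive decomposition of $T_m$ is unique only up to permutation and sign, and one must extract a single common orthogonal basis from the $q=d$ hypothesis. To carry this out I would reduce to an analogue of simultaneous orthogonal diagonalization of commuting symmetric matrices: flatten along mode $d$ and use the symmetry of $T\bullet_d T$ to show that appropriate Gram-type matrices built from the slices $T_m$ commute, which combined with the inductive step delivers the desired common frame.
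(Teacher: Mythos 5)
First, note that the paper does not prove Theorem~\ref{thm:1}: it is quoted from \cite{BDHR} as background, so there is no in-paper proof to compare against. Judged on its own, your forward direction is correct and is the standard computation: orthogonality in mode $q$ kills the off-diagonal terms of $T\bullet_qT$ and the surviving diagonal terms are visibly in $\bigotimes_{\ell\neq q}S^2(\mathbb R^{n_\ell})$. The base case $d=2$ via the SVD and the observation that the hypotheses restrict to the slices $T_m$ (set $i_d=j_d=m$ in the symmetry condition for $T\bullet_qT$, $q<d$) are also fine.

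The converse, however, is a plan rather than a proof, and the entire mathematical content of the theorem sits in the step you defer. The per-slice decompositions supplied by induction are far from unique (not merely up to permutation and sign: whenever weights coincide there is a continuous orthogonal family of decompositions, as for the identity matrix), so ``the frames can be chosen to coincide'' is precisely what must be established. Your proposed mechanism --- that the $q=d$ symmetry makes ``Gram-type matrices built from the slices commute'' --- is not substantiated: what the hypothesis gives is, e.g., that $\sum_m T_mAT_m^{\top}$ is symmetric for every $A$ (equivalently $\sum_m (T_me_i)\wedge(T_me_j)=0$), and symmetry of each member of a family does not yield commutativity of the family; turning these identities into a single common orthogonal frame across all $d-1$ remaining modes simultaneously is the hard part and is exactly where \cite{BDHR} must work (via the structure of an associated algebra in the symmetric case, and a delicate induction in the ordinary case). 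There is also a concrete error in your last step: once $v^{(1)},\ldots,v^{(d-1)}$ are orthogonal frames, the contraction $T\bullet_qT$ for any $q<d$ collapses to $\sum_i\|v_i^{(q)}\|^2\bigl(\bigotimes_{\ell\neq q}v_i^{(\ell)}\bigr)^{\otimes2}$ and is \emph{automatically} symmetric, so the hypothesis at $q<d$ carries no information about mode $d$ and cannot give orthogonality of the reassembled vectors $v_i^{(d)}=(c_{1i},\ldots,c_{n_di})$; you must return to the $q=d$ condition, where (for $d\geq3$) linear independence of the terms $\bigotimes_{\ell<d}\bigl(v_i^{(\ell)}\otimes v_j^{(\ell)}\bigr)$ forces $\langle v_i^{(d)},v_j^{(d)}\rangle=0$ for $i\neq j$. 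That last point is fixable; the alignment step is the genuine gap.
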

\vspace{0mm}
\begin{figure}[ht]
\begin{center}\includegraphics[width=0.9\linewidth]{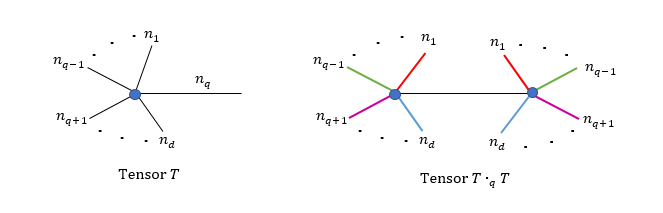}
\end{center}
\caption{$T$ is odeco iff the the entry $(T\bullet_q T)_{i_1\ldots i_{q-1}i_{q+1}\ldots i_dj_1\ldots j_{q-1}j_{q+1}\ldots j_d}$ remains unchanged when any two indices corresponding to edges with the same color are permuted.}\label{f2}
\end{figure}
A similar and easier to state result holds in the symmetric case.
\begin{theorem}[\cite{BDHR}]\label{thm:2} Let $T\in S^d(\mathbb R^n)$. Define $T\bullet_1 T\in S^{d-1}(\mathbb R^{n})\otimes S^{d-1}(\mathbb R^n)$ by
$$(T\bullet_1 T)_{i_2\ldots i_d j_2\ldots j_d} = \sum_{m=1}^n T_{mi_2\ldots i_d}T_{mj_2\ldots j_d},$$
i.e., the contraction of $T$ with itself along its first mode. Then, $T$ is odeco if and only if
$$T\bullet_1 T \in S^{2d-2}(\mathbb R^n),$$
i.e., the entry $(T\bullet_1 T)_{i_2\ldots i_d j_2\ldots j_d}$ remains unchanged after permuting any of its indices.
\end{theorem}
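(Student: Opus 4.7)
The plan is to prove both directions, with the forward implication being a direct computation and the backward implication being a reduction to Theorem~\ref{thm:1} combined with a uniqueness argument for the odeco decomposition.

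For the forward direction, I would start with a symmetric odeco decomposition $T=\sum_{i=1}^r\lambda_i v_i^{\otimes d}$ with $v_1,\ldots,v_r$ orthonormal, and compute
\[
T\bullet_1 T \;=\; \sum_{i,j=1}^r\lambda_i\lambda_j\,\langle v_i,v_j\rangle\, v_i^{\otimes(d-1)}\otimes v_j^{\otimes(d-1)} \;=\; \sum_{i=1}^r \lambda_i^2\, v_i^{\otimes(2d-2)},
\]
where orthonormality collapses the double sum. The right-hand side is manifestly a symmetric $(2d-2)$-tensor, which establishes one direction.

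For the backward direction, I would apply Theorem~\ref{thm:1} to $T$ viewed merely as an element of $(\mathbb R^n)^{\otimes d}$ (forgetting symmetry for the moment). Because $T$ itself is symmetric, for every $q\in\{1,\ldots,d\}$ the contraction $T\bullet_q T$ agrees with $T\bullet_1 T$ after a suitable relabeling of modes. The hypothesis $T\bullet_1 T\in S^{2d-2}(\mathbb R^n)$ is strictly stronger than the pairwise symmetry conditions required by Theorem~\ref{thm:1}: invariance under swapping $i_\ell$ with $j_\ell$ for each $\ell$ is a special case of full symmetry of all $2d-2$ indices. Hence, Theorem~\ref{thm:1} yields an ordinary odeco decomposition $T=\sum_{i=1}^r v_i^{(1)}\otimes\cdots\otimes v_i^{(d)}$ with orthogonal frames in each mode.

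To upgrade this to a \emph{symmetric} odeco decomposition, I would invoke the essential uniqueness of the odeco decomposition: given an ordinary odeco tensor, the rank-one summands are determined up to reordering and rescaling of the individual vectors. Since $T$ is symmetric, applying any permutation of the modes leaves $T$ unchanged, and the uniqueness forces, for each $i$, the vectors $v_i^{(1)},\ldots,v_i^{(d)}$ to be scalar multiples of a common unit vector $v_i$; absorbing the scalars into a single coefficient $\lambda_i$ produces $T=\sum_i\lambda_i v_i^{\otimes d}$ with $v_1,\ldots,v_r$ orthonormal. I expect this last uniqueness/alignment step to be the main obstacle, since it requires either citing or rederiving the essential uniqueness of the ordinary odeco decomposition and carefully matching the orthogonal frames across different modes; everything else reduces to bookkeeping around Theorem~\ref{thm:1}.
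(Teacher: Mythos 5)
The paper itself gives no proof of this statement: Theorem~\ref{thm:2} is imported from \cite{BDHR}, where the hard direction is proved by attaching to $T$ a commutative bilinear product on $\mathbb R^n$, using the symmetry of $T\bullet_1T$ to show this algebra is associative and compatible with the inner product, and reading off the decomposition from the structure theory of such algebras. Your route---reduce to the non-symmetric Theorem~\ref{thm:1} and then symmetrize the resulting decomposition---is therefore genuinely different. Its first steps are fine: the computation $T\bullet_1T=\sum_i\lambda_i^2v_i^{\otimes(2d-2)}$ is correct, and since $T$ is symmetric every $T\bullet_qT$ coincides with $T\bullet_1T$ up to relabelling, whose full invariance under all permutations of the $2d-2$ indices certainly contains the particular swaps $i_\ell\leftrightarrow j_\ell$ required by Theorem~\ref{thm:1}. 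So Theorem~\ref{thm:1} does hand you an ordinary odeco decomposition of $T$.

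The gap is in the final alignment step, in two places. First, essential uniqueness of odeco decompositions is nowhere available in this paper and must be cited or proved; it is true for $d\ge3$ (for instance via Kruskal's theorem, since orthogonal frames have full $k$-rank, or via Zhang--Golub), but it fails for $d=2$ when singular values repeat, so $d=2$ must be treated separately (there the hypothesis $T\bullet_1T\in S^2(\mathbb R^n)$ is automatic and the claim is exactly the spectral theorem). Second, even granting uniqueness, applying a transposition of modes $s$ and $t$ to $T$ only tells you that the multiset of rank-one summands is preserved: there is a permutation $\pi$ of $\{1,\dots,r\}$ and signs with $v_i^{(t)}\otimes v_i^{(s)}\otimes\cdots=\pm\,v_{\pi(i)}^{(s)}\otimes v_{\pi(i)}^{(t)}\otimes\cdots$, whereas you assert without argument that the terms align, i.e.\ that $\pi$ is the identity. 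This is precisely where $d\ge3$ enters: the transposition fixes some third mode $j$, so $v_i^{(j)}$ must be parallel to $v_{\pi(i)}^{(j)}$, and since the $j$th-mode vectors are pairwise orthogonal and nonzero this forces $\pi(i)=i$; only then do you obtain $v_i^{(s)}\parallel v_i^{(t)}$ for all $i$ and all pairs $s,t$, hence a common unit vector $v_i$ per term. With the uniqueness citation and this extra observation supplied, your argument closes; without them the symmetrization step is incomplete.
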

\vspace{0mm}
\begin{figure}[ht]
\begin{center}\includegraphics[width=0.8\linewidth]{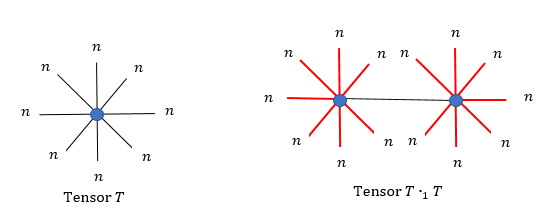}
\end{center}
\caption{$T$ is odeco iff the entry $(T\bullet_1 T)_{i_2\ldots i_d j_2\ldots j_d}$ remains unchanged when any of the indices are permuted.}\label{f3}
\end{figure}
In~\cite[Lemma 3.7]{Robeva}, it has been shown that the Zariski closure in $S^d(\mathbb{C}^n)$ of the set of symmetrically  orthogonally  decomposable tensors in $S^d(\mathbb{R}^n)$ is an irreducible component of the variety of the ideal defined by the symmetries (i.e., quadratic equations in the entries of $T$) introduced in Theorem \ref{thm:2}. So, it suffices to prove that this ideal is prime in order to conclude that it is exactly equal to the ideal of the Zariski closure in $S^d(\mathbb{C}^n)$ of the set of symmetrically  orthogonally  decomposable tensors in $S^d(\mathbb{R}^n)$. This has been proven when $n=2$ in~\cite[Theorem 3.6]{Robeva}, but is still an open problem for an arbitrary $n$.

\section{Orthogonally Decomposable Tensor Trains}\label{s3}

We now turn to tensors which decompose according to a tensor network, where all the tensors at the vertices of the network are symmetric and odeco. In particular, we confine our study to tensor trains. 

\begin{definition}
A tensor train of length $l$ (recall Figure~\ref{f4}) is said to be symmetrically orthogonally decomposable, or for short, symmetrically odeco, if each of the $l$ order-3 tensors generating the tensor train is $n\times n\times n$ symmetrically odeco. We denote the  set of such symmetrically odeco tensor trains by $SOT_{l,n}$. 
\end{definition}
It has been shown in~\cite{HalMulRob20} that if a tensor train of length 2 has a symmetrically odeco decomposition, one can find the decomposition efficiently.
Our goal here is to find polynomials which vanish on the set $SOT_{l,n}$ for certain $n,l$, and use them to determine the prime ideal of the Zariski closure of this set. In order to derive these polynomials for $l=2$, our approach is to consider the sets of variables
\begin{align*}
&P=\Set{p_{ijkl}|i,j,k,l\in\Set{1,\ldots,n}} && V=\Set{v_{ij}|i,j\in\Set{1,\ldots,n}} \\
& W=\Set{w_{ij}|i,j\in\Set{1,\ldots,n}}
&&M=\Set{\mu_i|i\in\Set{1,\ldots,n}} \\& L=\Set{\lambda_i|i\in\Set{1,\ldots,n}},
\end{align*}
define the following $\mathbb{R}$-algebra morphism
\[  \begin{array}{c}
\phi: \mathbb{R}\left[P\right]\to \mathbb{R}\left[ V\cup W\cup M\cup L   \right] \\
\phi\left(p_{ijkl}\right)=\sum_{a=1}^{n} \sum_{b=1}^{n} \sum_{c=1}^n \mu_a \lambda_b v_{a i} v_{a j} w_{b k} w_{b l}  v_{ac}w_{bc},
\end{array}
\]
and compute $\phi^{-1}\left(I\right)$ using computer algebra software such as Macaulay2, where
\[I=\left\langle \bigcup_{k=1}^n \Set{ \sum_{i=1}^n v_{ki}^2=1,\sum_{i=1}^n w_{ki}^2=1} \right\rangle +\left\langle \bigcup_{\substack{{k,l=1} \\ k\neq l}}^n \Set{\sum_{i=1}^n v_{ki}v_{li}=0,\sum_{i=1}^n w_{ki}w_{li}=0} \right\rangle.\]
We were unable to run an analogous program which would allow us to come up with the polynomials in the cases where $l>2$. So, we confine our study to length-2 tensor trains (see Figure~\ref{f1}) in this paper.

We now introduce one set of linear polynomials and one set of quadratic polynomials which vanish on $SOT_{2,n}
$, where $n\geq 2$.

Let $n\in\mathbb{N}$ and let $\mathcal{P}_n$ denote the set of polynomials
\[f_{a,b,c,d,\sigma_1,\sigma_2}\coloneqq p_{abcd}-p_{\sigma_1(a)\sigma_1(b)\sigma_2(c)\sigma_2(d)},\]
where $a,b,c,d\in\Set{1,\ldots,n}$, $\sigma_1$ is a permutation on $\Set{a,b}$, and $\sigma_2$ is a permutation on $\Set{c,d}$.
Moreover, let $\mathcal{Q}_n$ denote the set of polynomials
\[g^{(1)}_{a,b,c,d,e,f}\coloneqq\sum_{t=1}^n p_{abet}\ p_{cdft}-p_{abft}\ p_{cdet},\]
and 
\[g^{(2)}_{a,b,c,d,e,f}\coloneqq\sum_{t=1}^n p_{etab}\ p_{ftcd}-p_{ftab}\ p_{etcd},\]
where $a,b,c,d,e,f\in\Set{1,\ldots,n}$.

We now proceed to showing that the equations $\mathcal{P}_n$ and $\mathcal{Q}_n$ vanish on $SOT_{2,n}$. In particular, these also represent certain symmetries of the contracted tensor $T\bullet_qT$, similar to the odeco case described in section~\ref{ss2.3}. 
\begin{theorem} \label{thm:5}
Let $n\geq 2$. Then $\mathcal{P}_n$ and $\mathcal{Q}_n$ vanish on $SOT_{2,n}$.
\end{theorem}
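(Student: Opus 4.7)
The plan is to substitute the explicit parametric form of a tensor $T \in SOT_{2,n}$ into each polynomial and simplify using the orthonormality relations. Fix any $T \in SOT_{2,n}$. By definition, $T$ arises by contracting two symmetrically odeco order-3 tensors
$$A_{ijc} = \sum_{a=1}^n \mu_a\, v_{ai} v_{aj} v_{ac}, \qquad B_{ckl} = \sum_{b=1}^n \lambda_b\, w_{bc} w_{bk} w_{bl},$$
where $\{v_a\}_{a=1}^n$ and $\{w_b\}_{b=1}^n$ are each orthonormal in $\mathbb{R}^n$. Contracting along the middle edge yields
$$T_{ijkl} = \sum_{c=1}^n A_{ijc} B_{ckl} = \sum_{a,b} \mu_a \lambda_b \langle v_a, w_b\rangle\, v_{ai} v_{aj} w_{bk} w_{bl},$$
which is exactly $\phi(p_{ijkl})$. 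Writing $\alpha_{ab} := \mu_a \lambda_b \langle v_a, w_b\rangle$, I will use the working form $T_{ijkl} = \sum_{a,b} \alpha_{ab}\, v_{ai} v_{aj} w_{bk} w_{bl}$.

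The vanishing of $\mathcal{P}_n$ is then immediate from this form: the factor $v_{ai}v_{aj}$ is invariant under $i \leftrightarrow j$, and $w_{bk}w_{bl}$ is invariant under $k \leftrightarrow l$. Hence $T_{ijkl}$ is symmetric in its first pair of indices and independently in its last pair, which is precisely what each $f_{a,b,c,d,\sigma_1,\sigma_2}$ encodes.

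For $\mathcal{Q}_n$, I would substitute the working form into $\sum_t T_{abet} T_{cdft}$ and pull everything independent of $t$ out of the sum. The only $t$-dependence sits in the product $w_{qt} w_{st}$ coming from the two $w$-factors indexed by the contracted mode; orthonormality of $\{w_b\}$ collapses $\sum_t w_{qt} w_{st} = \delta_{qs}$. After this collapse, the indices $e$ and $f$ appear in the expression only through the single factor $w_{qe}w_{qf}$, which is manifestly symmetric in $e \leftrightarrow f$. Therefore swapping $e$ with $f$ leaves the sum invariant, so $g^{(1)}_{a,b,c,d,e,f}$ vanishes on $T$. The argument for $g^{(2)}_{a,b,c,d,e,f}$ is entirely analogous, with the contraction now running over the $v$-factors so that orthonormality of $\{v_a\}$ gives $\sum_t v_{pt} v_{rt} = \delta_{pr}$, leaving a symmetric factor $v_{pe} v_{pf}$.

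No step here is a serious obstacle: the equations in $\mathcal{Q}_n$ are engineered exactly so that contracting along the summed index $t$ triggers the orthonormality of one of the two families and produces a symmetric pair of factors which cancels under the prescribed swap. The genuinely hard work, which this theorem only sets up, will be to prove that the polynomials in $\mathcal{P}_n$ and $\mathcal{Q}_n$, together with the higher-degree equation mentioned in the abstract, in fact generate the prime ideal of the Zariski closure of $SOT_{2,n}$.
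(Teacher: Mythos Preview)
Your proof is correct and follows essentially the same approach as the paper: both expand the entries of $P\in SOT_{2,n}$ via the orthonormal decompositions of the two node tensors, read off the $\mathcal P_n$ symmetries directly from the form $v_{ai}v_{aj}w_{bk}w_{bl}$, and then use the orthonormality relation $\sum_t w_{qt}w_{st}=\delta_{qs}$ (respectively $\sum_t v_{pt}v_{rt}=\delta_{pr}$) to force the $\mathcal Q_n$ contractions to be symmetric in $e,f$. The only cosmetic difference is that the paper keeps both terms of $g^{(1)}$ together and factors out $(w_{je}w_{lf}-w_{jf}w_{le})$ before splitting into the cases $j=l$ and $j\neq l$, whereas you compute $\sum_t p_{abet}p_{cdft}$ alone and observe the resulting $w_{qe}w_{qf}$ symmetry; these are the same argument.
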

\begin{figure}[h]
\begin{center}\includegraphics[width=0.9\linewidth]{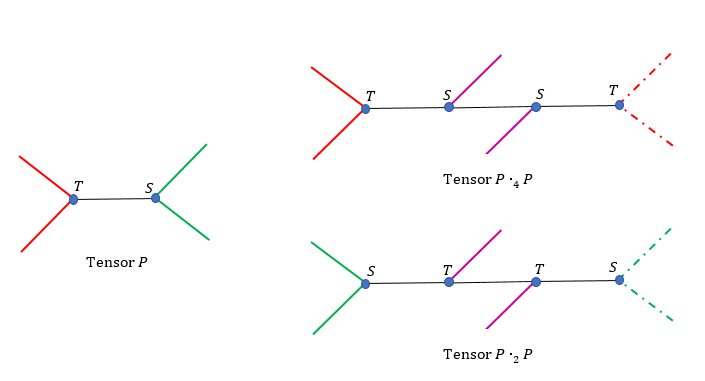}
\end{center}
\caption{The fact that $\mathcal{P}_n$ vanishes on $SOT_{2,n}$ shows that $P$ is symmetric with respect to any of the two pairs of modes indicated in red and green.\\The fact that $\mathcal{Q}_n$ vanishes on $SOT_{2,n}$ shows that $P\bullet_4 P$ and $P\bullet_2 P$ are symmetric with respect to the modes indicated in purple. }\label{f5}
\end{figure}
\begin{remark} The equations $\mathcal Q_n$ depict certain symmetries in the contracted tensors $P\bullet_2 P$ and $P\bullet_4 P$, very similar to the odeco case (see Theorems~\ref{thm:1} and \ref{thm:2}, and Figures~\ref{f2} and \ref{f3}). We have explained these symmetries in Figure~\ref{f5}. In particular, $P\bullet_4 P \in S^2(\mathbb R^n)\otimes S^2(\mathbb R^n)\otimes S^2(\mathbb R^n)$, where the first two $S^2(\mathbb R^n)$ correspond to the red edges and the red dashed edges respectively, and the last $S^2(\mathbb R^n)$ corresponds to the commuting of the indices coming from the purple edges and is the one that gives rise to half of the equations in $\mathcal Q_n$. Similarly, $P\bullet_2 P \in S^2(\mathbb R^n)\otimes S^2(\mathbb R^n)\otimes S^2(\mathbb R^n)$, where the first two $S^2(\mathbb R^n)$ correspond to the green and green dashed edges respectively, and the last $S^2(\mathbb R^n)$ corresponds to the fact that the indices along the purple edges commute and gives rise to the other half of the equations in $\mathcal Q_n$. 
\end{remark}
\begin{proof}
We begin by writing an arbitrary entry of a given tensor $P$ in $SOT_{2,n}$ in terms of the entries of the orthogonal vectors which appear in the decomposition of the two tensors at the nodes of $P$, and then proceed by writing the equations in $\mathcal{P}_n$ in terms of the entries of the two node tensors and the equations in $\mathcal{Q}_n$ in terms of the entries of these vectors. This is helpful since it provides us with the opportunity of applying the symmetry of the tensors and the orthogonality of the vectors.

Let $P\in SOT_{2,n}$. So, there exist vectors $v_1,\ldots,v_n,w_1,\ldots,w_n\in\mathbb{R}^n$ and scalars $\lambda_1,\ldots,\lambda_n $, $\mu_1,\ldots,\mu_n\in\mathbb{R}$ such that $\Set{v_1,\ldots,v_n}$ and $\Set{w_1,\ldots,w_n}$ are orthonormal sets, $T=\sum_{i=1}^n \lambda_i v_i^{\otimes 3}$, $S=\sum_{i=1}^n \mu_i w_i^{\otimes 3}$, and $P$ is the result of the contraction of $T$ and $S$ along their third modes. Therefore, for all $a,b,c,d\in\Set{1,\ldots,n}$,
{\allowdisplaybreaks
\begin{align*}
p_{abcd}&=\sum_{s=1}^n T_{abs} S_{cds}
=\sum_{s=1}^n \(\sum_{i=1}^n \lambda_i v_{ia}v_{ib}v_{is}\) \(\sum_{j=1}^n \mu_j w_{jc}w_{jd}w_{js}\)\\
&=\sum_{i=1}^n \sum_{j=1}^n \lambda_i \mu_j v_{ia} v_{ib} w_{jc} w_{jd} \(\sum_{s=1}^n v_{is} w_{js}\)
=\sum_{i=1}^n \sum_{j=1}^n \lambda_i \mu_j v_{ia} v_{ib} w_{jc} w_{jd} \langle v_i,w_j\rangle.
\end{align*}}
So, if $f_{a,b,c,d,\sigma_1,\sigma_2}\in \mathcal{P}_n$, then
{\allowdisplaybreaks
\begin{align*}
f_{a,b,c,d,\sigma_1,\sigma_2}&=p_{abcd}-p_{\sigma_1(a)\sigma_1(b)\sigma_2(c)\sigma_2(d)}\\
&=\(\sum_{s=1}^n T_{abs} S_{cds}\)  -\(\sum_{s=1}^n T_{\sigma_1(a)\sigma_1(b)s} S_{\sigma_2(c)\sigma_2(d)s}\) \\
&= \(\sum_{s=1}^n T_{abs} S_{cds}\)  -\(\sum_{s=1}^n T_{abs} S_{cds}\) &&  (\text{$T$ and $S$ are symmetric.})\\
&=0.
\end{align*}}
Thus, $\mathcal{P}_n$ vanishes on $SOT_{2,n}$. Now suppose that $g^{(1)}_{a,b,c,d,e,f}\in\mathcal{Q}_{n}$. Then 
{\allowdisplaybreaks
\begin{align*}
&g^{(1)}_{a,b,c,d,e,f}
=\sum_{t=1}^n p_{abet}\ p_{cdft}-p_{abft}\ p_{cdet}\\
&=\sum_{t=1}^n\biggm[ \(\sum_{i=1}^n \sum_{j=1}^n \lambda_i \mu_j v_{ia} v_{ib} w_{je} w_{jt} \langle v_i,w_j\rangle\) \( \sum_{k=1}^n \sum_{l=1}^n \lambda_k \mu_l v_{kc} v_{kd} w_{lf} w_{lt} \langle v_k,w_l\rangle \)  \\
&\phantom{\sum_{t=1}^n}-\(\sum_{i=1}^n \sum_{j=1}^n \lambda_i \mu_j v_{ia} v_{ib} w_{jf} w_{jt} \langle v_i,w_j\rangle\) \( \sum_{k=1}^n \sum_{l=1}^n \lambda_k \mu_l v_{kc} v_{kd} w_{le} w_{lt} \langle v_k,w_l\rangle \) \biggm]\\
&=\sum_{t=1}^n \biggm[ \( \sum_{i=1}^n \sum_{j=1}^n \sum_{k=1}^n \sum_{l=1}^n \lambda_i\lambda_k \mu_j \mu_l v_{ia} v_{ib} v_{kc} v_{kd} w_{je}w_{jt} w_{lf} w_{lt} \langle v_i , w_j\rangle \langle v_k,w_l\rangle     \)\\
&\phantom{\sum_{t=1}^n} - \( \sum_{i=1}^n \sum_{j=1}^n \sum_{k=1}^n \sum_{l=1}^n \lambda_i\lambda_k \mu_j \mu_l v_{ia} v_{ib} v_{kc} v_{kd} w_{jf}w_{jt} w_{le} w_{lt} \langle v_i , w_j\rangle \langle v_k,w_l\rangle     \)\biggm]\\
&=\sum_{t=1}^n\sum_{i=1}^n \sum_{j=1}^n \sum_{k=1}^n \sum_{l=1}^n \lambda_i\lambda_k \mu_j \mu_l v_{ia} v_{ib} v_{kc} v_{kd}w_{jt} w_{lt} \langle v_i , w_j\rangle \langle v_k,w_l\rangle\( w_{je}w_{lf}-w_{jf}w_{le}   \)\\
&=\sum_{i=1}^n \sum_{j=1}^n \sum_{k=1}^n \sum_{l=1}^n \lambda_i\lambda_k \mu_j \mu_l v_{ia} v_{ib} v_{kc} v_{kd}   \langle v_i , w_j\rangle \langle v_k,w_l\rangle\( w_{je}w_{lf}-w_{jf}w_{le}   \)  \( \sum_{t=1}^n w_{jt}w_{lt} \).
\end{align*}}
If $j=l$, then $w_{je}w_{lf}-w_{jf}w_{le}=0$, and otherwise, $\sum_{t=1}^n w_{jt}w_{lt}=\langle w_j, w_l\rangle=0$. This shows that  
$g^{(1)}_{a,b,c,d,e,f}=0$.
Similarly, if $g^{(2)}_{a,b,c,d,e,f}\in\mathcal{Q}_n$, then $g^{(2)}_{a,b,c,d,e,f}$ vanishes on $P$. Hence, $\mathcal{Q}_n$ vanishes on $SOT_{2,n}$.
\end{proof}

Now we proceed by introducing a polynomial of degree $n$ which vanishes on $SOT_{2,n}$.\\

Let $n\in\mathbb{N}$. Define
\[h_n\coloneqq \sum_{k_1=1}^n \ldots \sum_{k_n=1}^n \sum_{\sigma\in S_n}\sum_{\gamma\in S_n}{\rm sgn}(\sigma)\  {\rm sgn}(\gamma)\  p_{k_1 \sigma(1) k_n \gamma(1)}\ p_{k_2 \sigma(2) k_1 \gamma(2)}\cdots p_{k_n  \sigma(n) k_{n-1} \gamma(n)}.\]
\begin{theorem}
Let $n\geq 2$. Then $h_n$ vanishes on $SOT_{2,n}$.
\end{theorem}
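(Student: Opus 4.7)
The plan is to follow the substitution strategy from the proof of Theorem~\ref{thm:5} and then carry out an additional antisymmetrization plus one combinatorial reparametrization that exhibits $h_n$ as a signed sum of determinants, each of which vanishes because of the column orthogonality of the matrix $A=(\langle v_i,w_j\rangle)_{i,j=1}^n$.

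For a tensor $P\in SOT_{2,n}$ with $p_{abcd}=\sum_{i,j=1}^n\lambda_i\mu_j v_{ia}v_{ib}w_{jc}w_{jd}A_{ij}$, I would substitute into $h_n$ and interchange sums. The internal sum $\sum_{\sigma\in S_n}\sgn(\sigma)\prod_{m=1}^n v_{i_m,\sigma(m)}$ is a Leibniz-formula determinant that vanishes unless $(i_1,\ldots,i_n)=(\alpha(1),\ldots,\alpha(n))$ for some $\alpha\in S_n$, in which case it equals $\sgn(\alpha)\det V$; the $\gamma$ sum contributes $\sgn(\beta)\det W$ analogously, forcing $(j_1,\ldots,j_n)=(\beta(1),\ldots,\beta(n))$. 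With indices treated cyclically modulo $n$ as in the definition of $h_n$, each variable $k_\ell$ then appears in only the factor $v_{\alpha(\ell),k_\ell}w_{\beta(\ell+1),k_\ell}$, so orthonormality of the $w_j$'s collapses $\sum_{k_\ell}$ to $\langle v_{\alpha(\ell)},w_{\beta(\ell+1)}\rangle=A_{\alpha(\ell),\beta(\ell+1)}$. Since $\prod_m\lambda_{\alpha(m)}=\prod_i\lambda_i$ and $\prod_m\mu_{\beta(m)}=\prod_j\mu_j$ do not depend on $\alpha,\beta$, this yields
\[h_n=(\det V)(\det W)\prod_{i=1}^n\lambda_i\prod_{j=1}^n\mu_j\cdot S,\quad S:=\sum_{\alpha,\beta\in S_n}\sgn(\alpha)\sgn(\beta)\prod_{m=1}^nA_{\alpha(m),\beta(m)}A_{\alpha(m),\beta(m+1)}.\]

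The remaining task is to show $S=0$. I would set $\pi:=\alpha\beta^{-1}$ and $\delta:=\beta c^{-1}\beta^{-1}$, where $c\in S_n$ is the $n$-cycle sending $m$ to $m+1$ (mod $n$); because the centralizer of $c^{-1}$ in $S_n$ has order $n$, the map $(\alpha,\beta)\mapsto(\pi,\delta)$ is $n$-to-$1$ onto $S_n\times C_n$, where $C_n$ is the set of $n$-cycles in $S_n$, and $\sgn(\alpha)\sgn(\beta)=\sgn(\pi)$. A cyclic reindexing of the factor involving $\beta(m+1)$ rewrites the monomial as $\prod_{j=1}^nA_{\pi(j),j}A_{\pi(j),\delta^{-1}(j)}$, so the sum over $\pi$ weighted by $\sgn(\pi)$ is exactly $\det Y^{(\delta)}$ for $Y^{(\delta)}_{ij}:=A_{ij}A_{i,\delta^{-1}(j)}$, and therefore $S=n\sum_{\delta\in C_n}\det Y^{(\delta)}$. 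Because $V$ and $W$ are orthogonal, $A$ is orthogonal, so its columns $a_1,\ldots,a_n$ are orthonormal and $\sum_iY^{(\delta)}_{ij}=\langle a_j,a_{\delta^{-1}(j)}\rangle$. Since $\delta$ is an $n$-cycle with $n\geq 2$ it has no fixed points, so this inner product vanishes for every $j$; every column of $Y^{(\delta)}$ sums to zero, $(1,\ldots,1)$ lies in its left null space, and $\det Y^{(\delta)}=0$. Hence $S=0$ and $h_n=0$. The main obstacle I anticipate is the bookkeeping in the reparametrization---keeping the signs, the $n$-to-$1$ covering, and the cyclic reindexing aligned so that the inner sum really manifests as a determinant of a matrix whose column sums are off-diagonal inner products of columns of $A$; once that identification is made, orthogonality closes the argument immediately.
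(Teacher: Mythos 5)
Your proof is correct, and while the reduction to equation~\eqref{q1} is essentially identical to the paper's (substitute the decomposition, recognize the $\sigma$- and $\gamma$-sums as Leibniz determinants, kill all index tuples with repetitions, collapse the $k_\ell$-sums by orthonormality of the $w_j$'s — you are in fact slightly more careful than the paper in retaining the constant factor $\prod_i\lambda_i\prod_j\mu_j$, which \eqref{q1} silently drops), your treatment of the remaining combinatorial sum $S$ is genuinely different and, to my eye, cleaner. The paper splits into cases: for $n$ even it constructs an explicit sign-reversing involution $(F_1,F_2)$ on pairs of permutations, and for $n$ odd it runs a separate argument that enlarges the sum by an auxiliary index $t$ and invokes orthonormality (Parseval) to close it. Your reparametrization $\pi=\alpha\beta^{-1}$, $\delta=\beta c^{-1}\beta^{-1}$ is uniform in $n$: the $n$-to-one count via the centralizer of the $n$-cycle $c^{-1}$ is right, the sign bookkeeping $\sgn(\alpha)\sgn(\beta)=\sgn(\pi)$ is right, and the identification of the inner $\pi$-sum with $\det Y^{(\delta)}$ for $Y^{(\delta)}_{ij}=A_{ij}A_{i,\delta^{-1}(j)}$ is a correct application of the Leibniz formula. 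Since $A=(\langle v_i,w_j\rangle)$ is a product of two orthogonal matrices, its columns are orthonormal, every column of $Y^{(\delta)}$ sums to $\langle a_j,a_{\delta^{-1}(j)}\rangle=0$ because an $n$-cycle with $n\geq 2$ is fixed-point-free, and each determinant vanishes. What your route buys is a single case-free argument and a structural explanation of the vanishing (each $Y^{(\delta)}$ is singular with the all-ones vector in its left kernel); what the paper's route buys is the observation, recorded in its subsequent remark, that the even-$n$ involution argument does not use orthogonality at all and hence proves vanishing on a larger set — a byproduct your unified argument does not reproduce, since it leans on orthogonality of $A$ for every $n$.
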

\begin{proof}
Similar to the proof of Theorem~\ref{thm:5}, we first write the polynomial $h_n$ in terms of the entries of the vectors appearing in the orthogonal decomposition of the two tensors at the nodes of a given tensor $P$ in $SOT_{2,n}$, which yields equation~\eqref{q1}. When $n$ is even, we show that the terms of the right hand side polynomial in~\eqref{q1} can be split into two partitions such than the partitions cancel each other out. In the case when $n$ is odd, by manipulating the terms of the right hand side polynomial in~\eqref{q1} and applying the Parseval's Equation, we show that $h_n$ vanishes on $SOT_{2,n}$.

Let $P\in SOT_{2,n}$. So, there exist vectors $v_1,\ldots,v_n,w_1,\ldots,w_n\in\mathbb{R}^n$ and scalars $\lambda_1,\ldots,\lambda_n$, $\mu_1,\ldots,\mu_n\in\mathbb{R}$ such that $\Set{v_1,\ldots,v_n}$ and $\Set{w_1,\ldots,w_n}$ are orthonormal sets, $T=\sum_{i=1}^n \lambda_i v_i^{\otimes 3}$, $S=\sum_{i=1}^n \mu_i w_i^{\otimes 3}$, and $P$ is the result of the contraction of $T$ and $S$ along their third modes.
Note that
{\allowdisplaybreaks
\begin{align*}
    h_n&=\sum_{k_1=1}^n\ldots \sum_{k_n=1}^n \sum_{\sigma\in S_n}\sum_{\gamma \in S_n} {\rm sgn}(\sigma)\ {\rm sgn}(\gamma) \times\\
    &\Bigg[\(\sum_{i_1=1}^n \sum_{j_1=1}^n \lambda_{i_1} \mu_{j_1} \langle v_{i_1},w_{j_1}\rangle v_{i_1 k_1} v_{i_1 \sigma(1)} w_{j_1 k_n} w_{j_1 \gamma(1)}\)\times\\
    &\phantom{\Bigg[}\(\sum_{i_2=1}^n \sum_{j_2=1}^n \lambda_{i_2} \mu_{j_2} \langle v_{i_2},w_{j_2}\rangle v_{i_2 k_2} v_{i_2 \sigma(2)} w_{j_2 k_1} w_{j_2 \gamma(2)}\)\times\\
    &\phantom{\Bigg[\ \ \ }\vdots\\
    &\phantom{\Bigg[}\(\sum_{i_n=1}^n \sum_{j_n=1}^n \lambda_{i_n} \mu_{j_n} \langle v_{i_n},w_{j_n}\rangle v_{i_n k_n} v_{i_n \sigma(n)} w_{j_n k_{n-1}} w_{j_n \gamma(n)}\)\Bigg]\\
    &=\sum_{i_1=1}^n \sum_{j_1=1}^n \ldots \sum_{i_n=1}^n \sum_{j_n=1}^n \sum_{k_1=1}^n \ldots \sum_{k_n=1}^n \sum_{\sigma\in S_n}\sum_{\gamma\in S_n} {\rm sgn}(\sigma)\  {\rm sgn}(\gamma) 
    \lambda_{i_1}\mu_{j_1}\cdots \lambda_{i_n}\mu_{j_n} \\
    &\phantom{=}\langle v_{i_1},w_{j_1}\rangle \cdots \langle v_{i_n},w_{j_n}\rangle v_{i_1k_1}v_{i_2 k_2}\cdots v_{i_n k_n}w_{j_1 k_n} w_{j_2 k_1}\cdots w_{j_n k_{n-1}}
    v_{i_1\sigma(1)}\cdots v_{i_n \sigma(n)}\\
    &\phantom{=}w_{j_1\gamma(1)}\cdots w_{j_n\gamma(n)}\\
    &=\sum_{i_1=1}^n\sum_{j_1=1}^n \ldots \sum_{i_n=1}^n \sum_{j_n=1}^n  \lambda_{i_1}\mu_{j_1}\cdots \lambda_{i_n}\mu_{j_n}\langle v_{i_1},w_{j_1}\rangle \cdots \langle v_{i_n},w_{j_n}\rangle\\
    &\phantom{=}\(\sum_{k_1=1}^n v_{i_1 k_1}w_{j_2 k_1}\)\cdots \(\sum_{k_{n-1}=1}^n v_{i_{n-1} k_{n-1}}w_{j_n k_{n-1}}\) \(\sum_{k_n=1}^n v_{i_n k_n}w_{j_1 k_n}\)\\
    &\phantom{=}\(\sum_{\sigma \in S_n} {\rm sgn(\sigma)} v_{i_1 \sigma(1)}\cdots v_{i_n \sigma(n)}  \) \(\sum_{\gamma \in S_n} {\rm sgn(\gamma)} w_{j_1 \gamma(1)}\cdots w_{j_n \gamma(n)}  \)\\
    &=\sum_{i_1=1}^n \sum_{j_1=1}^n\ldots \sum_{i_n=1}^n \sum_{j_n=1}^n \lambda_{i_1}\mu_{j_1}\cdots \lambda_{i_n}\mu_{j_n}\langle v_{i_1},w_{j_1}\rangle\cdots \langle v_{i_{n-1}},w_{j_{n-1}}\rangle \langle v_{i_n}, w_{j_n}\rangle\\
    &\phantom{=}\langle v_{i_1},w_{j_2}\rangle \cdots \langle v_{i_{n-1}},w_{j_n}\rangle
    \langle v_{i_n},w_{j_1}\rangle
    \det \(V_{i_1\ldots i_n}\) \det \(W_{j_1\ldots j_n}\),
\end{align*}}
where $V_{i_1\ldots i_n}\coloneqq\left[ \begin{array}{ccc} - & v_{i_1} ^T - \\ &\vdots & \\ - & v_{i_n}^T&-\end{array}\right]$ and $W_{j_1\ldots j_n}\coloneqq\left[ \begin{array}{ccc} - & w_{j_1} ^T - \\ &\vdots & \\ - & w_{j_n}^T&-\end{array}\right].$\\
For all $\(i_1,\ldots,i_n\)\in\Set{1,\ldots,n}^n$, if there exists $a,b\in\Set{1,\ldots,n}$ such that $a\neq b$ and $i_a=i_b$, then $\det\(V_{i_1\ldots i_n}\)=0$. Similarly, $\det\(W_{j_1\ldots j_n}\)=0$ if there exists a pair of equal coordinates in $\(j_1,\ldots,j_n\)$. Thus,
\begin{align}\label{q1}
    h_n=\det\(V_{1\ldots n}\) \det\(W_{1\ldots n}\) \sum_{\sigma, \gamma\in S_n}&{\rm sgn}(\sigma\gamma) \langle v_{\sigma(1)},w_{\gamma(1)}\rangle \cdots \langle v_{\sigma(n)},w_{\gamma(n)}\rangle\\
    &\langle v_{\sigma(1)},w_{\gamma(2)}\rangle
    \cdots\langle
     v_{\sigma(n-1)},w_{\gamma(n)}\rangle
    \langle v_{\sigma(n)},w_{\gamma(1)}\rangle.\notag
\end{align}
Now assume that $n$ is even. Define 
\begin{align*}
    \begin{array}{cc}
    \begin{array}{c}   
    F_1:S_n \to S_n \\
    F_1 \(\left[i_1,i_2,\ldots,i_n  \right] \)=
    \left[ i_1,i_n,i_{n-1},\ldots,i_2 \right]
    \end{array} &
    \begin{array}{c}   
    F_2:S_n \to S_n \\
    F_2 \(\left[j_1,j_2,\ldots,j_n  \right]   \)=
   \left[j_2,j_1,j_n,j_{n-1},j_{n-2},\ldots,j_3  \right]
    \end{array}.
    \end{array}
\end{align*}
Since for all $\sigma=\left[i_1,\ldots,i_n\right]\in S_n$, $F_1(\sigma)$ is obtained from $\sigma$ by swapping $i_2$ and $i_n$, swapping $i_3$ and $i_{n-1}$, ..., and finally, swapping $i_{\frac{n}{2}}$ and $i_{\frac{n+4}{2}}$, $\sgn\(F_1(\sigma)\)=(-1)^{\frac{n}{2}-1}\sgn(\sigma)$.
Similarly, for all $\gamma=\left[j_1,\ldots,j_n\right]\in S_n$, $F_2(\gamma)$ is obtained from $\gamma$ by first swapping $j_1$ and $j_2$, and then, swapping $j_3$ and $j_n$, swapping $j_4$ and $j_{n-1}$, ..., and swapping $j_{\frac{n+2}{2}}$ and $i_{\frac{n+4}{2}}$. Thus, $\sgn\(F_2(\gamma)\)=(-1)^{\frac{n}{2}}\sgn(\gamma)$.
Hence,
\begin{align*}
    &{\rm sgn} \(F_1(\sigma)F_2(\gamma)\) \langle v_{F_1(\sigma)(1)},w_{F_2(\gamma)(1)}\rangle \cdots \langle v_{F_1(\sigma)(n)},w_{F_2(\gamma)(n)}\rangle
    \langle v_{F_1(\sigma)(1)},w_{F_2(\gamma)(2)}\rangle
    \cdots
    \langle v_{F_1(\sigma)(n)},w_{F_2(\gamma)(1)}\rangle\\
    =&(-1)^{\frac{n}{2}+\frac{n}{2}-1}{\rm sgn}(\sigma\gamma) \langle v_{i_1},w_{j_2} \rangle  \langle v_{i_n} ,w_{j_1} \rangle \langle v_{i_{n-1}},w_{j_n} \rangle \cdots \langle v_{i_2},w_{j_3} \rangle
    \langle v_{i_1},w_{j_1}\rangle \langle v_{i_n},w_{j_n}\rangle \cdots \langle v_{i_3},w_{j_3}\rangle \langle v_{i_2},w_{j_2}\rangle\\
    =&-{\rm sgn}(\sigma\gamma) \langle v_{\sigma(1)},w_{\gamma(1)}\rangle \cdots \langle v_{\sigma(n)},w_{\gamma(n)}\rangle
    \langle v_{\sigma(1)},w_{\gamma(2)}\rangle
    \cdots\rangle
    \langle v_{\sigma(n)},w_{\gamma(1)}\rangle.
\end{align*}
On the other hand, for all $\sigma\in S_n$, $F_1\left(F_1(\sigma)\right)=\sigma$ and $F_2\left(F_2(\sigma)\right)=\sigma$, which implies that both $F_1$ and $F_2$ are bijections. So,
\begin{align*}
   &2\( \sum_{\sigma, \gamma\in S_n}{\rm sgn}(\sigma\gamma) \langle v_{\sigma(1)},w_{\gamma(1)}\rangle \cdots \langle v_{\sigma(n)},w_{\gamma(n)}\rangle
    \langle v_{\sigma(1)},w_{\gamma(2)}\rangle
    \cdots\langle
     v_{\sigma(n-1)},w_{\gamma(n)}\rangle
    \langle v_{\sigma(n)},w_{\gamma(1)}\rangle\)\\
    =&\sum_{\sigma, \gamma\in S_n}{\rm sgn}(\sigma\gamma) \langle v_{\sigma(1)},w_{\gamma(1)}\rangle \cdots \langle v_{\sigma(n)},w_{\gamma(n)}\rangle
    \langle v_{\sigma(1)},w_{\gamma(2)}\rangle
    \cdots\langle
     v_{\sigma(n-1)},w_{\gamma(n)}\rangle
    \langle v_{\sigma(n)},w_{\gamma(1)}\rangle\\
    +& \sum_{\sigma, \gamma\in S_n}{\rm sgn}\(F_1(\sigma) F_2(\gamma)\) \langle v_{F_1(\sigma)(1)},w_{F_2(\gamma)(1)}\rangle \cdots \langle v_{F_1(\sigma)(n)},w_{F_2(\gamma)(n)}\rangle\\
    &\phantom{\sum_{\sigma, \gamma\in S_n}}
    \langle v_{F_1(\sigma)(1)},w_{F_2(\gamma)(2)}\rangle
    \cdots\langle
     v_{F_1(\sigma)(n-1)},w_{F_2(\gamma)(n)}\rangle
    \langle v_{F_1(\sigma)(n)},w_{F_2(\gamma)(1)}\rangle\\
    = &\sum_{\sigma, \gamma\in S_n} \bigg( {\rm sgn}(\sigma\gamma) \langle v_{\sigma(1)},w_{\gamma(1)}\rangle \cdots \langle v_{\sigma(n)},w_{\gamma(n)}\rangle
    \langle v_{\sigma(1)},w_{\gamma(2)}\rangle
    \cdots\langle
     v_{\sigma(n-1)},w_{\gamma(n)}\rangle
    \langle v_{\sigma(n)},w_{\gamma(1)}\rangle \\
    &\phantom{\sum_{\sigma, \gamma\in S_n}}+  {\rm sgn}\(F_1(\sigma) F_2(\gamma)\) \langle v_{F_1(\sigma)(1)},w_{F_2(\gamma)(1)}\rangle \cdots \langle v_{F_1(\sigma)(n)},w_{F_2(\gamma)(n)}\rangle\\
    &\phantom{\sum_{\sigma, \gamma\in S_n}}\langle v_{F_1(\sigma)(1)},w_{F_2(\gamma)(2)}\rangle
    \cdots\langle
     v_{F_1(\sigma)(n-1)},w_{F_2(\gamma)(n)}\rangle
    \langle v_{F_1(\sigma)(n)},w_{F_2(\gamma)(1)}\rangle \bigg)\\
    =& 0,
\end{align*}
and thus, 
\begin{align*}
    \sum_{\sigma, \gamma\in S_n}{\rm sgn}(\sigma\gamma) \langle v_{\sigma(1)},w_{\gamma(1)}\rangle \cdots \langle v_{\sigma(n)},w_{\gamma(n)}\rangle
    \langle v_{\sigma(1)},w_{\gamma(2)}\rangle
    \cdots\langle
     v_{\sigma(n-1)},w_{\gamma(n)}\rangle
    \langle v_{\sigma(n)},w_{\gamma(1)}\rangle=0.
\end{align*}
According to \eqref{q1}, this means that $h_n=0$. \\
We now proceed by assuming that $n$ is odd. In this case, for all $t\in\Set{1,\ldots,n}$, define
\begin{align*}
    \begin{array}{c}
    F_1^{(t)}:S_n\to S_n\\
    F_1^{(t)}\(\left[ i_1,\ldots,i_n\right] \)=\left[i_{\gamma^{-1}(t)-1}, i_{\gamma^{-1}(t)}, i_{\gamma^{-1}(t)+1},\ldots,i_n,i_1,i_2,\ldots,i_{\gamma^{-1}(t)-2} \right]
    \end{array}
\end{align*}
and
\begin{align*}
    \begin{array}{c}
    F_2^{(t)}:S_n\to S_n\\
    F_2^{(t)}\(\left[ j_1,\ldots,j_n\right] \)=\left[j_{\gamma^{-1}(t)-1}, j_{2}, j_{\gamma^{-1}(t)+1},\ldots,j_n,j_1,j_{\gamma^{-1}(t)},j_3,\ldots,j_{\gamma^{-1}(t)-2} \right]
    \end{array},
\end{align*}
where if $k<0$, by $i_k$ and $j_k$ we mean $i_l$ and $j_l$ respectively, such that $k \equiv l \mod{n}$ and $l\in\Set{1,\ldots,5}$.
Suppose that for all $\sigma=\left[i_1,\ldots,i_n  \right]\in S_n$, $F_1^{(t)}(\sigma)$ is obtained from $\sigma$ by swapping $m$ pairs. Then $\sgn\(F_1^{(t)}(\sigma)\)=(-1)^m \sgn(\sigma)$. Moreover, in this case, according to the definition of $F_2^{(t)}$, for all $\gamma=\left[j_1,\ldots,j_n\right]\in S_n$ with $\gamma(2)\neq t$, $F_2^{(t)}(\gamma)$ is obtained from $\gamma$ by first, swapping the same $m$ pairs and then, swapping $j_2$ and $j_{\gamma^{-1}(t)}$ in the resulting permutation, which means that $\sgn\(F_2^{(t)}(\gamma)\)=(-1)^{m+1} \sgn(\gamma)$.
So, if $\gamma(2)\neq t$, then 
\small{\begin{align*}
    &\sgn(F_1^{(t)}(\sigma) F_2^{(t)}({\gamma}) )\langle v_{F_1^{(t)}(\sigma)(1)},w_{F_2^{(t)}(\gamma)(1)} \rangle \langle v_{F_1^{(t)}(\sigma)(2)},w_{t} \rangle \langle v_{F_1^{(t)}(\sigma)a(3)},w_{F_2^{(t)}(\gamma)(3)} \rangle \cdots \langle v_{F_1^{(t)}(\sigma)(n)},w_{F_2^{(t)}(\gamma)(n)} \rangle \\ 
    &\phantom{\sgn (F_1^{(t)}(\sigma) F_1^{(t)}(\gamma))}\langle v_{F_1^{(t)}(\sigma)(1)},w_{t} \rangle  \langle v_{F_1^{(t)}(\sigma)(2)},w_{F_2^{(t)}(\gamma)(3)} \rangle  \langle v_{F_1^{(t)}(\sigma)(3)},w_{F_2^{(t)}(\gamma)(4)} \rangle \cdots  \langle v_{F_1^{(t)}(\sigma)(n)},w_{F_2^{(t)}(\gamma)(1)} \rangle \\
    =&(-1)^{2m+1}\sgn(\sigma \gamma) \langle v_{i_{\gamma^{-1}(t)-1}},w_{j_{\gamma^{-1}(t)-1}} \rangle \langle v_{i_{\gamma^{-1}(t)}},w_{t} \rangle \cdots \langle v_{i_n},w_{j_n} \rangle
    \langle v_{i_1},w_{j_1} \rangle \langle v_{i_2},w_{j_{\gamma^{-1}(t)}} \rangle \langle v_{i_{\gamma^{-1}(t)-2}},w_{j_{\gamma^{-1}(t)-2}} \rangle\\ 
    &\phantom{(-1)^{2m+1}\sgn(\sigma \gamma) }\langle v_{i_{\gamma^{-1}(t)-1}},w_{t} \rangle  \langle v_{i_{\gamma^{-1}(t)}},w_{j_{\gamma^{-1}(t)+1}}  \rangle \cdots \langle v_{i_n},w_{j_1} \rangle \langle v_{i_1},w_{j_{\gamma^{-1}(t)}} \rangle \langle v_{i_2},w_{j_3} \rangle \cdots  \langle v_{i_{\gamma^{-1}(t)-2}},w_{j_{\gamma^{-1}(t)-1}} \rangle\\
    =&-\sgn (\sigma \gamma)\langle v_{\sigma(1)},w_{\gamma(1)} \rangle \langle v_{\sigma(2)},w_{t} \rangle \langle v_{\sigma(3)},w_{\gamma(3)} \rangle \cdots \langle v_{\sigma(n)},w_{\gamma(n)} \rangle \\ 
    &\phantom{\sgn (\sigma \gamma)}\langle v_{\sigma(1)},w_{t} \rangle  \langle v_{\sigma(2)},w_{\gamma(3)} \rangle  \langle v_{\sigma(3)},w_{\gamma(4)} \rangle \cdots  \langle v_{\sigma(n)},w_{\gamma(1)} \rangle.
\end{align*}}

Now let $t\in\Set{1,\ldots,n}$. Since for all $\sigma\in S_n$, $F_1^{(t)}\(F_1^{(t)}(\sigma)\)=\sigma$ and $F_2^{(t)}\(F_2^{(t)}(\sigma)\)=\sigma$, $F_1^{(t)}$ and $F_2^{(t)}$ are bijections. Additionally, considering that for $\gamma\in S_n$, if $\gamma(2)\neq t$, then $F_2^{(t)}(\gamma)(2)\neq t$, $F_2^{(t)}$ is also a bijection on $A^{(t)}\coloneqq \Set{\gamma\in S_n | \gamma(2)\neq t}$. Hence,
\begin{align*}
    &2\sum_{\sigma\in S_n}\sum_{\gamma\in A^{(t)}}\sgn (\sigma \gamma)\langle v_{\sigma(1)},w_{\gamma(1)} \rangle \langle v_{\sigma(2)},w_{t} \rangle \langle v_{\sigma(3)},w_{\gamma(3)} \rangle \cdots \langle v_{\sigma(n)},w_{\gamma(n)} \rangle \\ 
    &\phantom{\sum_{\sigma\in S_n}\sum_{\gamma\in A^{(t)}}}\langle v_{\sigma(1)},w_{t} \rangle  \langle v_{\sigma(2)},w_{\gamma(3)} \rangle  \langle v_{\sigma(3)},w_{\gamma(4)} \rangle \cdots  \langle v_{\sigma(n)},w_{\gamma(1)} \rangle\\
    =&\sum_{\sigma\in S_n}\sum_{\gamma\in A^{(t)}}\sgn (\sigma \gamma)\langle v_{\sigma(1)},w_{\gamma(1)} \rangle \langle v_{\sigma(2)},w_{t} \rangle \langle v_{\sigma(3)},w_{\gamma(3)} \rangle \cdots \langle v_{\sigma(n)},w_{\gamma(n)} \rangle \\ 
    &\phantom{\sum_{\sigma\in S_n}\sum_{\gamma\in A^{(t)}}}\langle v_{\sigma(1)},w_{t} \rangle  \langle v_{\sigma(2)},w_{\gamma(3)} \rangle  \langle v_{\sigma(3)},w_{\gamma(4)} \rangle \cdots  \langle v_{\sigma(n)},w_{\gamma(1)} \rangle\\
    +&\sum_{\sigma\in S_n}\sum_{\gamma\in A^{(t)}}\sgn (F_1^{(t)}(\sigma) F_2^{(t)})\langle v_{F_1^{(t)}(\sigma)(1)},w_{F_2^{(t)}(1)} \rangle \langle v_{F_1^{(t)}(\sigma)(2)},w_{t} \rangle \langle v_{F_1^{(t)}(\sigma)(3)},w_{F_2^{(t)}(3)} \rangle \cdots \langle v_{F_1^{(t)}(\sigma)(n)},w_{F_2^{(t)}(n)} \rangle \\ 
    &\phantom{\sum_{\sigma\in S_n}\sum_{\gamma\in A^{(t)}}}\langle v_{F_1^{(t)}(\sigma)(1)},w_{t} \rangle  \langle v_{F_1^{(t)}(\sigma)(2)},w_{F_2^{(t)}(3)} \rangle  \langle v_{F_1^{(t)}(\sigma)(3)},w_{F_2^{(t)}(4)} \rangle \cdots  \langle v_{F_1^{(t)}(\sigma)(n)},w_{F_2^{(t)}(1)} \rangle\\
    =&\sum_{\sigma\in S_n}\sum_{\gamma\in A^{(t)}} \bigg(\sgn (\sigma \gamma)\langle v_{\sigma(1)},w_{\gamma(1)} \rangle \langle v_{\sigma(2)},w_{t} \rangle \langle v_{\sigma(3)},w_{\gamma(3)} \rangle \cdots \langle v_{\sigma(n)},w_{\gamma(n)} \rangle \\ 
    &\phantom{\sum_{\sigma\in S_n}\sum_{\gamma\in A^{(t)}}}\langle v_{\sigma(1)},w_{t} \rangle  \langle v_{\sigma(2)},w_{\gamma(3)} \rangle  \langle v_{\sigma(3)},w_{\gamma(4)} \rangle \cdots  \langle v_{\sigma(n)},w_{\gamma(1)} \rangle\\
    &\phantom{\sum_{\sigma\in S_n}\sum_{\gamma\in A^{(t)}}}+\sgn (F_1^{(t)}(\sigma) F_2^{(t)})\langle v_{F_1^{(t)}(\sigma)(1)},w_{F_2^{(t)}(1)} \rangle \langle v_{F_1^{(t)}(\sigma)(2)},w_{t} \rangle \langle v_{F_1^{(t)}(\sigma)(3)},w_{F_2^{(t)}(3)} \rangle \cdots \langle v_{F_1^{(t)}(\sigma)(n)},w_{F_2^{(t)}(n)} \rangle \\ 
    &\phantom{\sum_{\sigma\in S_n}\sum_{\gamma\in A^{(t)}}}\langle v_{F_1^{(t)}(\sigma)(1)},w_{t} \rangle  \langle v_{F_1^{(t)}(\sigma)(2)},w_{F_2^{(t)}(3)} \rangle  \langle v_{F_1^{(t)}(\sigma)(3)},w_{F_2^{(t)}(4)} \rangle \cdots  \langle v_{F_1^{(t)}(\sigma)(n)},w_{F_2^{(t)}(1)} \rangle\bigg)\\
    =& 0.
\end{align*}
Hence,
\allowdisplaybreaks{
\begin{align*}
    &\sum_{\sigma\in S_n}\sum_{\gamma\in S_n}\sgn (\sigma \gamma)\langle v_{\sigma(1)},w_{\gamma(1)} \rangle \langle v_{\sigma(2)},w_{t} \rangle \langle v_{\sigma(3)},w_{\gamma(3)} \rangle \cdots \langle v_{\sigma(n)},w_{\gamma(n)} \rangle \\ 
    &\phantom{\sum_{\sigma\in S_n}\sum_{\gamma\in S_n}}\langle v_{\sigma(1)},w_{t} \rangle  \langle v_{\sigma(2)},w_{\gamma(3)} \rangle  \langle v_{\sigma(3)},w_{\gamma(4)} \rangle \cdots  \langle v_{\sigma(n)},w_{\gamma(1)} \rangle\\
    =& \sum_{\sigma\in S_n}\sum_{\gamma\in S_n\setminus A^{(t)}}\sgn (\sigma \gamma)\langle v_{\sigma(1)},w_{\gamma(1)} \rangle \langle v_{\sigma(2)},w_{t} \rangle \langle v_{\sigma(3)},w_{\gamma(3)} \rangle \cdots \langle v_{\sigma(n)},w_{\gamma(n)} \rangle \\ 
    &\phantom{\sum_{\sigma\in S_n}\sum_{\gamma\in S_n\setminus A^{(t)}}}\langle v_{\sigma(1)},w_{t} \rangle  \langle v_{\sigma(2)},w_{\gamma(3)} \rangle  \langle v_{\sigma(3)},w_{\gamma(4)} \rangle \cdots  \langle v_{\sigma(n)},w_{\gamma(1)} \rangle\\
    =& \sum_{\sigma\in S_n}\sum_{\gamma\in S_n\setminus A^{(t)}}\sgn (\sigma \gamma)\langle v_{\sigma(1)},w_{\gamma(1)} \rangle \langle v_{\sigma(2)},w_{\gamma(2)} \rangle \langle v_{\sigma(3)},w_{\gamma(3)} \rangle \cdots \langle v_{\sigma(n)},w_{\gamma(n)} \rangle \\ 
    &\phantom{\sum_{\sigma\in S_n}\sum_{\gamma\in S_n\setminus A^{(t)}}}\langle v_{\sigma(1)},w_{\gamma(2)} \rangle  \langle v_{\sigma(2)},w_{\gamma(3)} \rangle  \langle v_{\sigma(3)},w_{\gamma(4)} \rangle \cdots  \langle v_{\sigma(n)},w_{\gamma(1)} \rangle.
\end{align*}}
This means that 
\begin{align*}
    &\sum_{t=1}^n\sum_{\sigma\in S_n}\sum_{\gamma\in S_n}\sgn (\sigma \gamma)\langle v_{\sigma(1)},w_{\gamma(1)} \rangle \langle v_{\sigma(2)},w_{t} \rangle \langle v_{\sigma(3)},w_{\gamma(3)} \rangle \cdots \langle v_{\sigma(n)},w_{\gamma(n)} \rangle \\ 
    &\phantom{\sum_{t=1}^n\sum_{\sigma\in S_n}\sum_{\gamma\in S_n}}\langle v_{\sigma(1)},w_{t} \rangle  \langle v_{\sigma(2)},w_{\gamma(3)} \rangle  \langle v_{\sigma(3)},w_{\gamma(4)} \rangle \cdots  \langle v_{\sigma(n)},w_{\gamma(1)} \rangle\\
    =& \sum_{t=1}^n\sum_{\sigma\in S_n}\sum_{\gamma\in S_n\setminus A^{(t)}}\sgn (\sigma \gamma)\langle v_{\sigma(1)},w_{\gamma(1)} \rangle \langle v_{\sigma(2)},w_{\gamma(2)} \rangle \langle v_{\sigma(3)},w_{\gamma(3)} \rangle \cdots \langle v_{\sigma(n)},w_{\gamma(n)} \rangle \\ 
    &\phantom{\sum_{t=1}^n\sum_{\sigma\in S_n}\sum_{\gamma\in S_n\setminus A^{(t)}}}\langle v_{\sigma(1)},w_{\gamma(2)} \rangle  \langle v_{\sigma(2)},w_{\gamma(3)} \rangle  \langle v_{\sigma(3)},w_{\gamma(4)} \rangle \cdots  \langle v_{\sigma(n)},w_{\gamma(1)} \rangle\\
    =& \sum_{\sigma\in S_n}\sum_{\gamma\in S_n}\sgn (\sigma \gamma)\langle v_{\sigma(1)},w_{\gamma(1)} \rangle \langle v_{\sigma(2)},w_{\gamma(2)} \rangle \langle v_{\sigma(3)},w_{\gamma(3)} \rangle \cdots \langle v_{\sigma(n)},w_{\gamma(n)} \rangle \\ 
    &\phantom{\sum_{\sigma\in S_n}\sum_{\gamma\in S_n}}\langle v_{\sigma(1)},w_{\gamma(2)} \rangle  \langle v_{\sigma(2)},w_{\gamma(3)} \rangle  \langle v_{\sigma(3)},w_{\gamma(4)} \rangle \cdots  \langle v_{\sigma(n)},w_{\gamma(1)} \rangle.
\end{align*}
On the other hand,
\begin{align*}
    &\sum_{t=1}^n\sum_{\sigma\in S_n}\sum_{\gamma\in S_n}\sgn (\sigma \gamma)\langle v_{\sigma(1)},w_{\gamma(1)} \rangle \langle v_{\sigma(2)},w_{t} \rangle \langle v_{\sigma(3)},w_{\gamma(3)} \rangle \cdots \langle v_{\sigma(n)},w_{\gamma(n)} \rangle \\ 
    &\phantom{\sum_{t=1}^n\sum_{\sigma\in S_n}\sum_{\gamma\in S_n}}\langle v_{\sigma(1)},w_{t} \rangle  \langle v_{\sigma(2)},w_{\gamma(3)} \rangle  \langle v_{\sigma(3)},w_{\gamma(4)} \rangle \cdots  \langle v_{\sigma(n)},w_{\gamma(1)} \rangle\\
    =& \sum_{\sigma\in S_n}\sum_{\gamma\in S_n}\sgn (\sigma \gamma) \langle v_{\sigma(1)},w_{\gamma(1)} \rangle  \langle v_{\sigma(3)},w_{\gamma(3)} \rangle \cdots \langle v_{\sigma(n)},w_{\gamma(n)} \rangle \\ 
    &\phantom{\sum_{\sigma\in S_n}\sum_{\gamma\in S_n}}\langle v_{\sigma(2)},w_{\gamma(3)} \rangle  \langle v_{\sigma(3)},w_{\gamma(4)} \rangle \cdots  \langle v_{\sigma(n)},w_{\gamma(1)} \rangle
    \bigg(\sum_{t=1}^n \langle v_{\sigma(2)},w_{t} \rangle\langle v_{\sigma(1)},w_{t} \rangle \bigg)\\
    =& \sum_{\sigma\in S_n}\sum_{\gamma\in S_n}\sgn (\sigma \gamma) \langle v_{\sigma(1)},w_{\gamma(1)} \rangle  \langle v_{\sigma(3)},w_{\gamma(3)} \rangle \cdots \langle v_{\sigma(n)},w_{\gamma(n)} \rangle \\ 
    &\phantom{\sum_{\sigma\in S_n}\sum_{\gamma\in S_n}}\langle v_{\sigma(2)},w_{\gamma(3)} \rangle  \langle v_{\sigma(3)},w_{\gamma(4)} \rangle \cdots  \langle v_{\sigma(n)},w_{\gamma(1)} \rangle
    \langle v_{\sigma(1)},v_{\sigma(2)}\rangle\\
    =&0.
\end{align*}
Thus, by \eqref{q1}, $h_n=0$.
\end{proof}

\begin{remark}
Note that the second part of the above proof, which proves the theorem when $n$ is odd, is actually applicable to the case where $n$ is even as well. However, the approach that has been used for the case where $n$ is even indicates that in this case, $h_n$ vanishes on any tensor train of length 2 whose nodes are tensors in $S^3\(\mathbb{R}^n\)$ with rank at most $n$, regardless of whether or not these tensors are orthogonally decomposable.
\end{remark}

When $n=2$ and $n=3$, we are able to show using Maple that $h_n$ does not lie in the ideal generated by $\mathcal{P}_n \cup \mathcal{Q}_n$, but vanishes on the set $SOT_{2,n}$.

Below we state our general conjecture about $SOT_{2,n}$ and provide some evidence for this conjecture.
\color{black}
\vspace{-0.5cm}
\begin{conjecture}
The Zariski closure of the set $SOT_{2,n}$ in $\mathbb{C}^{n^4}$ is cut out by the vanishing of $\mathcal P_n, \mathcal Q_n$, and the polynomial $h_n$. The ideal defined by these equations is prime.
\end{conjecture}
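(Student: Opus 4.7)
The plan is to split the conjecture into two tightly linked tasks: the set-theoretic equality $V(\mathcal{P}_n,\mathcal{Q}_n,h_n)=\overline{SOT_{2,n}}$ in $\mathbb{C}^{n^4}$, and the primality of the ideal $I_n:=\langle\mathcal{P}_n\cup\mathcal{Q}_n\cup\{h_n\}\rangle$. Theorem~\ref{thm:5} together with the theorem that $h_n$ vanishes on $SOT_{2,n}$ already yields the inclusion $\overline{SOT_{2,n}}\subseteq V(I_n)$, and the parametrization $\phi$ from the excerpt exhibits $SOT_{2,n}$ as the image of an irreducible parameter space, so its Zariski closure is automatically irreducible. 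These reductions show that the whole conjecture is equivalent to the identity $I_n=I(\overline{SOT_{2,n}})$, for which it suffices to verify, at a generic point of $\overline{SOT_{2,n}}$, that $V(I_n)$ has no extraneous components and that the differentials of the generators span the expected cotangent subspace.

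For the reverse inclusion $V(I_n)\subseteq\overline{SOT_{2,n}}$, I would pursue a reconstruction argument. Given a generic $P\in V(I_n)$, the linear symmetries $\mathcal{P}_n$ place $P$ in the canonical form in which its first two indices and its last two indices each commute, while the quadratic relations $\mathcal{Q}_n$ assert that $P\bullet_2 P$ and $P\bullet_4 P$ are symmetric in an additional pair of modes. Following the slice-method philosophy used for ordinary odeco tensors, this symmetry should translate into simultaneous diagonalizability of a naturally attached pencil of matrices, thereby recovering candidate orthonormal frames $v_1,\ldots,v_n$ and $w_1,\ldots,w_n$ together with the corresponding scalars. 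The role of $h_n$ in this plan is to cut out exactly the spurious component of $V(\mathcal{P}_n,\mathcal{Q}_n)$ on which such frames exist but fail to assemble into a genuine orthogonal tensor train; one should aim to isolate this extraneous component explicitly and confirm that $h_n$ vanishes on $\overline{SOT_{2,n}}$ but is nonzero there.

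For the primality, the natural route is the Jacobian criterion at a generic point $P_0\in SOT_{2,n}$. Using $\phi$ one computes the tangent space to $\overline{SOT_{2,n}}$ at $P_0$, which should have dimension $n(n+1)$, and then checks that the differentials of the generators of $I_n$ at $P_0$ span a codimension-$n(n+1)$ subspace of the cotangent space at $P_0$. Because the whole problem is equivariant under the natural $O(n)\times O(n)$ action and the generators transform in identifiable representations, both the tangent space and the span of the differentials decompose into isotypic components, and the rank computation reduces to a representation-theoretic tally rather than a raw linear-algebra calculation in $\mathbb{C}^{n^4}$.

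The main obstacle is the reconstruction step: the Macaulay2 verification for $n=2$ gives no algorithmic template for arbitrary $n$, and pinning down the precise role of the degree-$n$ equation $h_n$ in ruling out spurious components is the genuinely new ingredient compared with the purely quadratic odeco case. A secondary difficulty is that, even if the set-theoretic reconstruction succeeds, showing that $I_n$ carries no embedded components may require going beyond a single Jacobian computation, for instance by exhibiting a regular sequence among the generators or by leveraging the $O(n)\times O(n)$-equivariance to bootstrap from the known $n=2$ result established in the excerpt.
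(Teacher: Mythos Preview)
The statement you are attempting to prove is labelled a \emph{Conjecture} in the paper, and indeed the paper does not prove it. The only evidence offered there is the dimension count of Lemma~\ref{lemma:1} together with a Maple verification that $\langle\mathcal P_2,\mathcal Q_2,h_2\rangle$ is prime of the correct dimension in the case $n=2$; for $n\ge 3$ the authors explicitly say that computational limitations prevent even a computer check. Consequently there is no ``paper's own proof'' to compare against, and your write-up is a strategy outline for an open problem rather than a completed argument.

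As a strategy, your outline isolates the right easy reductions (irreducibility of $\overline{SOT_{2,n}}$ from the parametrization $\phi$, and the inclusion $\overline{SOT_{2,n}}\subseteq V(I_n)$ from Theorem~\ref{thm:5} and the vanishing of $h_n$), but the two steps you yourself flag as obstacles---the reconstruction of orthonormal frames from a generic point of $V(I_n)$, and the Jacobian rank computation---are precisely the content of the conjecture, and neither is carried out. In particular, the assertion that $h_n$ ``cuts out exactly the spurious component'' of $V(\mathcal P_n,\mathcal Q_n)$ is unsupported: the paper only checks that $h_n\notin\langle\mathcal P_n,\mathcal Q_n\rangle$ for $n=2,3$ and says nothing about what the extraneous components of $V(\mathcal P_n,\mathcal Q_n)$ look like in general, so there is no mechanism here for showing that a single degree-$n$ equation removes all of them. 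There is also a numerical slip: by Lemma~\ref{lemma:1} the dimension of $\overline{SOT_{2,n}}$ is $n(n+1)-1$, not $n(n+1)$, so your Jacobian target codimension is off by one.
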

\begin{lemma}\label{lemma:1}
The dimension of the Zariski closure of $SOT_{2,n}$ in $\mathbb{C}^{n^4}$ is $n(n+1)-1$.
\end{lemma}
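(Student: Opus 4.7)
The plan is to parametrize $\overline{SOT_{2,n}}$ explicitly and apply the fiber-dimension theorem. Let $X := SO(n,\mathbb{C}) \times SO(n,\mathbb{C}) \times \mathbb{C}^n \times \mathbb{C}^n$, an irreducible smooth variety of complex dimension $2\binom{n}{2} + 2n = n(n+1)$, and consider the polynomial map
\[ \Phi: X \longrightarrow \mathbb{C}^{n^4}, \qquad (V,W,\lambda,\mu) \longmapsto P, \qquad P_{abcd} = \sum_{i,j=1}^{n} \lambda_i \mu_j \langle v_i, w_j\rangle\, v_{ia} v_{ib} w_{jc} w_{jd}, \]
where $v_i, w_j$ are the rows of $V, W$ and $\langle\cdot,\cdot\rangle$ denotes the complex bilinear form. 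This is exactly the expression derived in the proof of Theorem~\ref{thm:5}. Since $SO(n,\mathbb{R})$ is Zariski-dense in $SO(n,\mathbb{C})$ and every element of $SOT_{2,n}$ is the image under $\Phi$ of a real point of $X$ (any reflection can be absorbed by flipping signs in $\lambda_i$ or $\mu_j$), the Zariski closures of $\Phi(X)$ and of $SOT_{2,n}$ in $\mathbb{C}^{n^4}$ coincide. By the fiber-dimension theorem, $\dim \overline{SOT_{2,n}} = n(n+1) - \dim(\text{generic fiber of } \Phi)$.

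One half of the fiber estimate is immediate: for each $c \in \mathbb{C}^*$, the substitution $(V, W, \lambda, \mu) \mapsto (V, W, c\lambda, c^{-1}\mu)$ leaves every product $\lambda_i\mu_j$, hence $P$, unchanged, so every fiber of $\Phi$ has dimension at least $1$. The main step is to show the reverse inequality for a generic $P$, which I would do through a slice-based recovery procedure. Fix a generic $(V, W, \lambda, \mu) \in X$ and let $P = \Phi(V, W, \lambda, \mu)$. For any indices $a, b$, the slice
\[ P_{ab\bullet\bullet} = \sum_{j=1}^{n} \alpha_j(a,b)\, w_j w_j^{T}, \qquad \alpha_j(a,b) = \mu_j \sum_{i=1}^{n} \lambda_i \langle v_i, w_j\rangle v_{ia} v_{ib}, \]
is a complex symmetric $n \times n$ matrix, and because $w_1, \ldots, w_n$ form a complex-orthonormal set, each $w_j$ is an eigenvector of $P_{ab\bullet\bullet}$ with eigenvalue $\alpha_j(a,b)$. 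For a generic choice of $(a,b)$ these eigenvalues are pairwise distinct, so $\{w_1,\dots,w_n\}$ is uniquely determined up to signs and permutation. A symmetric argument on slices of the form $P_{\bullet\bullet cd}$ recovers $\{v_1,\dots,v_n\}$.

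Once $V$ and $W$ are pinned down, the scalar $\lambda_i \mu_j \langle v_i, w_j\rangle$ is read off as the coefficient of $v_i \otimes v_i \otimes w_j \otimes w_j$ in $P$; because generically every $\langle v_i, w_j\rangle$ is nonzero, this determines the rank-one matrix $\lambda\mu^T$, and from it $(\lambda, \mu)$ up to precisely the one-parameter scaling used in the lower bound. Hence the generic fiber of $\Phi$ is one-dimensional, and combining the two estimates gives $\dim \overline{SOT_{2,n}} = n(n+1) - 1$.

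The main obstacle I foresee is verifying that the genericity conditions used above (pairwise distinctness of $\alpha_j(a,b)$ for some choice of $(a,b)$, and nonvanishing of every $\langle v_i, w_j\rangle$) cut out a nonempty Zariski-open subset of $X$. Since $X$ is irreducible, this reduces to exhibiting a single explicit point of $X$ at which all of these polynomial conditions hold simultaneously; a small perturbation of $V = W = I$ together with generic $\lambda, \mu$ should suffice.
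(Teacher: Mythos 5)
Your proposal follows essentially the same route as the paper's proof: parametrize the set by $SO_n\times SO_n\times(\text{scalars})\times(\text{scalars})$, a space of dimension $n(n+1)$; observe the one-parameter rescaling $(\lambda,\mu)\mapsto(c\lambda,c^{-1}\mu)$ inside every fiber; and apply the fiber-dimension theorem. The one substantive difference is how the generic fiber is shown to be exactly one-dimensional. The paper simply cites the generic uniqueness of the decomposition (up to permutation, signs, and the global rescaling) from~\cite{HalMulRob20}, whereas you prove it directly via an eigendecomposition of the slices $P_{ab\bullet\bullet}=\sum_j\alpha_j(a,b)\,w_jw_j^T$: since $WW^T=I$, each $w_j$ is an eigenvector, and pairwise distinct $\alpha_j(a,b)$ pin down the $w_j$ up to sign and permutation, after which $\lambda_i\mu_j\langle v_i,w_j\rangle$ recovers $(\lambda,\mu)$ up to the scaling. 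This is sound and self-contained, and your handling of the real-to-complex passage (Zariski density of $SO(n,\mathbb{R})$ in $SO(n,\mathbb{C})$, absorbing reflections into the signs of the $\lambda_i$) is in fact more careful than the paper's, which computes a real fiber dimension and uses it for the complex closure without comment. The one loose end is the genericity check you yourself flag: note that at $V=W=I$ every slice $P_{ab\bullet\bullet}$ has at most one nonzero $\alpha_j$, so the witness point genuinely requires a perturbation; alternatively it suffices that the $n$ vectors $\bigl(\alpha_j(a,b)\bigr)_{a,b}$ be pairwise distinct (simultaneous diagonalization of all slices), or one may take a generic linear combination of slices, either of which is easier to verify at an explicit point. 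Modulo that routine verification, your argument is correct and trades the paper's external citation for a longer but self-contained recovery procedure.
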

\begin{proof}
Define the map
\begin{align*}
    \begin{array}{c}
    \phi: \mathbb{R}^n \times SO_n \times \mathbb{R}^n \times SO_n \to \mathbb{R}^{n^4}\subseteq \mathbb{C}^{n^4}\\
    \(\lambda_1,\ldots,\lambda_n\),V,\(\mu_1,\ldots,\mu_n\),W \mapsto \sum_{i=1}^n \sum_{j=1}^n \lambda_i \mu_j v_i^{\otimes 2} \otimes w_j^{\otimes 2} \langle v_i, w_j \rangle
    \end{array},
\end{align*}
where $v_i$ and $w_i$ are the $i$th columns of the orthogonal matrices $V$ and $W$ respectively. Then $SOT_{2,n}={\rm Im}(\phi)$. It is proved in~\cite{HalMulRob20} that the decomposition of a generic tensor in $SOT_{2,n}$ can be found uniquely up to permutations and a global rescaling of $\lambda_i$'s and $\mu_j$'s (i.e. if we multiply all $\lambda_i$'s by $C$, we have to divide all $\mu_j$'s by $C$.). So, the dimension of the general fibre of $\phi$ is 1. Thus,
\begin{align*}
    \dim\({\rm Im}(\phi)\)=\dim\(\mathbb{R}^n \times SO_n \times \mathbb{R}^n \times SO_n\)-1 = 2n + 2\binom{n}{2} -1 = n(n+1)-1.
\end{align*}
So, the dimension of the Zariski closure of $SOT_{2,n}$ in $\mathbb{C}^{n^4}$ is $\dim\(\overline{{\rm Im} (\phi)}\) = n(n+1)-1$.
\end{proof}
When $n=2$, Lemma~\ref{lemma:1} shows that the dimension of the Zariski closure of the set $SOT_{2,2}$ in $\mathbb{C}^{16}$ equals 5. The dimension of the ideal $I \coloneqq \langle  \mathcal P_2, \mathcal Q_2 , h_2 \rangle$ also equals 5, which was checked using 
 \verb|Groebner[HilbertDimension]|
in Maple. Furthermore, \verb|PolynomialIdeals[PrimeDecomposition]| only generates one ideal when applied to $I$, which means that the radical of $I$ is prime. On the other hand, \verb|PolynomialIdeals[IsRadical]| verifies that $I$ is radical. So, $I$ is actually prime, and the above-mentioned equality of the dimensions proves that $I$ is the prime ideal of the Zariski closure of $SOT_{2,2}$ in $\mathbb{C}^{16}$.
We have not been able to verify that that the ideal generated by $\mathcal{P}_n$, $\mathcal{Q}_n$, and $h_n$ is prime for higher values of $n$ using Maple due to computational limitations.

\section{Acknowledgements}  ER was supported by an NSERC  Discovery Grant (DGECR-2020-00338).
\bibliographystyle{plain} \pagestyle{empty}
\bibliography{OrthogonalTT}

\end{document}